\definecolor{violet}{rgb}{.5,0,.5}
\definecolor{orange}{rgb}{1,.65,0}
\newcommand{\inner}[1]{\langle #1 \rangle}
\newcommand{\lift}{\text{lift}}
\newcommand{\tr}{\text{tr}}
\newcommand{\grad}{\text{grad}}
\newcommand{\Hess}{\text{Hess}}
\newcommand{\co}{{\min}}
\newcommand{\KM}{\mathrm{KM}}
\newcommand{\GL}{\mathrm{GL}}
\newcommand{\lambdamax}{\overline{\lambda}}
\newcommand{\cl}{\text{cl}}
\newcommand{\calC}{\mathcal{C}}
\newcommand{\calD}{\mathcal{D}}
\newcommand{\calG}{\mathcal{G}}
\newcommand{\calH}{\mathcal{H}}
\newcommand{\calK}{\mathcal{K}}
\newcommand{\calL}{\mathcal{L}}
\newcommand{\calM}{\mathcal{M}}
\newcommand{\calN}{\mathcal{N}}
\newcommand{\calO}{\mathcal{O}}
\newcommand{\calR}{\mathcal{R}}
\newcommand{\calS}{\mathcal{S}}
\newcommand{\calU}{\mathcal{U}}
\newcommand{\calV}{\mathcal{V}}
\newcommand{\scrT}{\mathcal{T}}
\newcommand{\sfK}{\mathsf{K}}
\newcommand{\sfL}{\mathsf{L}}
\newcommand{\bbL}{\mathbb{L}}
\newcommand{\bbN}{\mathbb{N}}
\newcommand{\bbR}{\mathbb{R}}
\newcommand{\bbS}{\mathbb{S}}
\newcommand{\bfE}{\mathbf{E}}
\newcommand{\bfF}{\mathbf{F}}
\newcommand{\bfG}{\mathbf{G}}
\newcommand{\bfV}{\mathbf{V}}
\newcommand{\bfW}{\mathbf{W}}
\newcommand{\mat}[1]{\begin{bmatrix} #1 \end{bmatrix}}
\newtheorem{theorem}{Theorem}[section]
\newtheorem{lemma}[theorem]{Lemma}
\newtheorem{assumption}[theorem]{Assumption}
\def\BibTeX{{\rm B\kern-.05em{\sc i\kern-.025em b}\kern-.08em
    T\kern-.1667em\lower.7ex\hbox{E}\kern-.125emX}}
\begin{document}
\title{Output-feedback Synthesis Orbit Geometry:\\
Quotient Manifolds and LQG Direct Policy Optimization}
\author{Spencer Kraisler, \IEEEmembership{Member, IEEE} and Mehran Mesbahi, \IEEEmembership{Fellow, IEEE} 
\thanks{The authors are with the William E. Boeing Department of Aeronautics and Astronautics, University of Washington. Emails: \{kraisler+mesbahi\}@uw.edu.}}

\maketitle

\begin{abstract}
We consider direct policy optimization for the
linear-quadratic Gaussian (LQG) setting. Over the past few
years, it has been recognized that the landscape of 
dynamic output-feedback controllers of relevance to LQG has an intricate
geometry, particularly pertaining to the existence of degenerate stationary points, that hinders gradient methods. In order to address these challenges, in this paper, we adopt a system-theoretic coordinate-invariant Riemannian metric for the space of dynamic output-feedback controllers and develop a Riemannian gradient descent for direct LQG policy optimization. We then proceed to prove that the orbit space of such controllers, modulo the coordinate transformation, admits a Riemannian quotient manifold structure. This geometric structure--that is of independent interest--provides an effective approach to derive direct policy optimization algorithms for LQG with a local linear rate convergence guarantee. Subsequently, we show that the proposed approach exhibits significantly faster and more robust numerical performance as compared with ordinary gradient descent.
\end{abstract}

\section{Introduction}
\label{sec:introduction}
Direct policy optimization (PO) synthesizes controllers by formalizing optimization problems over controller parameters rather than first solving for value functions or Lyapunov certificates, say using matrix inequalities. In recent years, PO has been shown to be an effective first-order procedure for a number of feedback synthesis problems, while also providing a natural bridge between control synthesis and reinforcement learning with \textit{stabilization} guarantees \cite{hu2023toward}. 

In the PO setting, design problems such as the linear-quadratic regulator (LQR), linear-quadratic Gaussian (LQG), and mixed $H_2/H_\infty$, are directly parameterized in terms of the corresponding stabilizing feedback parameters; subsequently, a first-order method is adopted to update these parameters with the goal of local convergence guarantees to the optimum. Such a ``direct'' synthesis procedure has necessitated a deeper understanding of the interplay between analytic properties of various control design objectives
in relation to the geometry of the space of stabilizing controllers~\cite{hu2022connectivity, bu2019topological, bu2020topological, tang2021analysis}. 

In the context of PO--as it turns out--LQG and the domain of dynamic output-feedback controllers has a more intricate landscape 
as compared with LQR and the domain of static state-feedback controllers, hindering a straightforward
adoption of first order methods. This includes spurious non-strict saddle points \cite{zheng2022escaping} and no coercivity. In fact, to the best of our knowledge, there are no local convergence guarantees for LQG PO. There are a number
of reasons for this. First, the coordinate-invariance of LQG implies that each stationary point lies within an ``orbit'' of stationary points. This observation implies that the LQG cost admits stationary points with singular Hessians. Next, certain systems can admit \textit{degenerate} LQG controllers, greatly impacting the convergence rate of optimization algorithms. In fact,
LQG controllers can be non-minimal and have small stability margins. Lastly, the search space of full-order LQG controllers is large with $n^2 + nm + np$ dimensions, where $n$, $m$, and $p$ are the dimensions of the state, control, and output, respectively. 

In this paper, we present a geometric framework to address the aforementioned issues; the key ingredient of our approach is framing PO for LQG over the \textit{Riemannian quotient manifold} of full-order minimal dynamic output-feedback controllers modulo coordinate transformation. Specifically, we prove that this setup is well-defined and leads to a Riemannian gradient descent (RGD) algorithm for LQG. Note that equipping a search space of an optimization problem with a Riemannian metric for the purpose of developing efficient algorithms for its solution is an active area of research in systems theory, optimization, and statistical learning~\cite{amari1998natural,mishra2016riemannian,talebi2022policy}. We show that our technique is far faster than gradient descent (GD) by invoking \cite[Thm. 4.19]{boumal2023introduction} 
for a proof of guaranteed local linear convergence under a reasonable assumption on the degeneracy of the LQG controller.

Although PO for control synthesis with implicit or explicit requirements on stabilization is a relatively recent research direction in control theory, our work benefits from tools historically developed in geometric system theory pertaining
to orbit spaces of linear systems. In fact, examining such orbits was initiated by Kalman and Hazewinkel in the early 1970s~\cite{kalman1974algebraic, hazewinkel1976moduli,hazewinkel1980fine,tannenbaum2006invariance,afsari2017bundle} in the context of system identification and realization theory. In this paper, we show that these tools are rather powerful for PO and data driven control since the set of output-feedback controllers is in fact {\em a family} of linear systems. In the meantime, optimization over
the geometry induced by orbits of linear (dynamic) controllers comes hand in hand with a number of technical issues 
that are addressed in this work.

\section{Preliminaries and Notation}\label{sect:background}

Consider the continuous-time linear system,
\begin{align}
    \dot{x}(t) = A x(t) + B u(t) + w(t), \; y(t) = C x (t) + v(t),\label{plant}
\end{align} 
\newgeometry{top=.75in,bottom=.75in,right=.75in,left=.75in,top=.75in}
as the plant model, where $A \in \bbR^{n \times n}$, $B \in \bbR^{n \times m}$, and $C \in \bbR^{p \times n}$.

The process $w(\cdot)$ and measurement $v(\cdot)$ noise terms are zero-mean Gaussian with covariance matrices $W \in \bbS^n_+$ (positive semidefinite) and $V \in \bbS_{++}^p$ (positive definite), respectively. We also assume that $(A,B)$ and $(A,W^{1/2})$ are controllable and $(A,C)$ is observable. An output-feedback (dynamic) controller of order $1 \leq q \leq n$ for the plant (\ref{plant}) is now parameterized as,
\begin{align}
    \dot{\xi}(t) = A_\sfK \xi(t) + B_\sfK y(t), \; u(t) = C_\sfK \xi(t), \label{controller}
\end{align} where $A_\sfK \in \bbR^{q \times q}$, $B_\sfK \in \bbR^{q \times p}$ and $C_\sfK \in \bbR^{m \times q}$. Let $\widetilde{\calC}_q$ be the set of all such $q$th-order output-feedback controllers, represented as 
\begin{align}\label{block-form}
    K =\mat{0_{m \times p} & C_\sfK \\ B_\sfK & A_\sfK} \in \bbR^{(m + q) \times (p + q)} .
\end{align} Now, let $J_q:\widetilde{\calC}_q \to \bbR$ be the LQG cost \cite[Eq. 2]{tang2021analysis} of $q$th-order controllers, with $Q \in \bbS^n_+$ and $R \in \bbS_{++}^m$ as the state and control cost matrices.
We assume that $(A,Q^{1/2})$ is observable. Our goal is to minimize $J_n$ over $\widetilde{\calC}_n$ via a first order procedure.

In order to examine output-feedback synthesis, we first consider the combined plant/controller closed loop
as,
\begin{subequations}\label{closed-loop}
    \begin{align}
        \mat{\dot{x} \\ \dot{\xi}} &= \mat{A & B C_\sfK \\ B_\sfK C & A_\sfK} \mat{x \\ \xi} + \mat{I_n & 0_{n \times p} \\ 0_{q \times n} & B_\sfK} \mat{w \\ v} \\
        \mat{y \\ u} &= \mat{C & 0_{p \times q} \\ 0_{m \times n} & C_\sfK}\mat{x \\ \xi} + \mat{0_{p \times n} & I_p \\ 0_{m \times n} & 0_{m \times p}}\mat{w \\ v}.
    \end{align}
\end{subequations} 

The realized closed-loop system and observation matrices are now, respectively,
\begin{align*}
    A_\cl(\sfK) &\in \bbR^{(n + q) \times (n + q)}, & B_\cl(\sfK) \in \bbR^{(n + q) \times (n + p)}, \\
    C_\cl(\sfK) &\in \bbR^{(m + p) \times (n + q)}, & D_\cl(\sfK) \in \bbR^{(m + p) \times (n + p)}.
\end{align*} Hence, (\ref{controller}) a is stabilizing feedback when $A_\cl(\sfK) \in \calH_{n + q}$, where $\calH_k$ denotes the set of $k \times k$ Hurwitz stable matrices. 

Let $\widetilde{\calC}^\co_q$ be the set of minimal (i.e., controllable and observable) $q$th-order output-feedback controllers. Our first observation is as follows.
\begin{lemma}\label{lem:generic}
    The subset $\widetilde{\calC}^\co_q \subset \widetilde{\calC}_q$ is an open, dense subset with a measure zero complement.
\end{lemma}
\begin{proof}
    This follows from the openness of $\widetilde{\calC}_q$ \cite[Lem. 2.1]{tang2021analysis} and the genericity of controllability and observability for linear systems \cite[Thm. 1.3]{wonham1974linear}.
\end{proof}

A key construct for our geometric approach is 
the Lyapunov operator $\bbL(A,Q) \in \bbS_+^k$, mapping $A \in \calH_k$ and $Q \in \bbS_{+}^k$ to the unique solution of $AP + PA^\textsf{T}  =-Q$. In fact, defining the maps
\begin{subequations}
    \begin{align}
        Q_\cl(\sfK) &:= \mat{Q & 0_{n \times q} \\ 0_{q \times n} & C_\sfK^\textsf{T}  R C_\sfK}, \\
        W_\cl(\sfK) &:= \mat{W & 0_{n \times q} \\ 0_{q \times n} & B_\sfK V B_\sfK^\textsf{T} }, \\
        X(\sfK) &:= \bbL\left(A_\cl(\sfK), W_\cl(\sfK) \right) \label{X},
    \end{align}
\end{subequations} on $\widetilde{\calC}_q$, facilitates recognizing that $J_q(\sfK) = \tr\left(Q_\cl(\sfK) X(\sfK)\right)$ is the LQG cost.
We note that the Euclidean gradient and Hessian of $J_q$ have been characterized in \cite{tang2021analysis}.

\subsection{Geometry of Riemannian gradient descent}\label{sect:rie-geo}
The algorithmic framework examined in this paper for PO synthesis requires basic notions from Riemannian geometry; these are briefly reviewed in this section. 
Let $\calM \subset \bbR^N$ be a smooth manifold. A smooth curve is a smooth function $c:\bbR \to \calM$. The tangent space at $x \in \calM$, denoted as $T_x \calM$, is the set of the tangent vectors $\dot{c}(0)$ of all smooth curves $c(\cdot)$ with $c(0)=x$. For example, as an Euclidean open set, the tangent spaces of $\widetilde{\calC}_q$ identifies as
\begin{equation*}\label{tangent-space}
    \calV_q := \left\{\mat{0_{m \times p} & G \\ F & E} \in \bbR^{(m + q) \times (p + q)}\right\} = T_\sfK \widetilde{\calC}_q .
\end{equation*} For matrix manifolds, tangent vectors are matrices; such vectors are denoted by boldface letters, e.g., $\bfV$. The disjoint union of tangent spaces is called the tangent bundle, denoted as  $T \calM$. 

Let $F: \calM \to \calN$ be a smooth function between two smooth manifolds $\calM$ and $\calN$.
The differential $dF_x:T_x\calM \to T_{F(x)} \calN$ of $F$ at $x$ along $v \in T_x\calM$ is the linear mapping defined as
\begin{equation*}
    dF_x(v) := \frac{d}{dt}\Big|_{t=0} (F \circ c)(t),
\end{equation*} where $c(\cdot)$ is any smooth curve satisfying $c(0)=x$ and $\dot{c}(0)=v$. 
For example, the differential of the Lyapunov operator $\bbL$ is,
\begin{equation*}
    d \bbL_{(A,Q)}(\bfV, \bfW) = \bbL(A, \bfV \bbL(A,Q) + \bbL(A,Q) \bfV^\textsf{T}  + \bfW).
\end{equation*} When the differential $dF_x(\cdot)$ is independent of $x$, we  simply write $dF(\cdot)$. 

A Riemannian metric is a smooth ``state-dependent'' inner product $\inner{.,.}_x: T_x \calM \times T_x \calM \to \bbR$. This metric induces a state-dependent norm $\|v\|_x := \sqrt{\inner{v,v}_x}$. Given the open set $U \subset \calM$, a local frame is a set of linearly independent vector fields $(E_i:U \to T\calM)_{i=1}^{\dim{\calM}}$. The coordinates $G(x) \in \bbS_{++}^n$ of the metric at $x \in U$ with respect to $(E_i)$ are
\begin{equation}\label{g-coords}
    G_{ij}(x) = \inner{E_i|_x, E_j|_x}_x.
\end{equation} With $G^{ij} := (G^{-1})_{ij}$, the gradient of $f:\calM \to \bbR$ at $x$ is then,
\begin{equation}\label{Riemannian-gradient}
    \nabla f(x) = \sum_{i=1}^n \sum_{j=1}^n G^{ij}(x) df_x(E_i|_x) E_j|_x.
\end{equation} We will denote the Euclidean gradient as $\grad f$ and the Riemannian gradient as $\nabla f$; similarly, $\Hess f$ and $\nabla^2 f$ for the corresponding Hessians. 

A retraction is a smooth mapping $\calR:\calS \subset T\calM \to \calM$ where $\calS$ is open, $(x,0_x) \in \calS$ for all $x$, and the curve $c(t):=\calR_x(tv) \equiv \calR(x,tv)$ satisfies $c(0)=x$ and $\dot{c}(0)=v$ for each $(x,v) \in \calS$. We define $\calR$-balls as $B_x(\rho):=\{\calR_x(\xi):\|\xi\|_x < \rho\}$. Retractions are the central constructs in Riemannian optimization. When $\calM \subset \calV_n$, one can use the Euclidean metric and retraction,
\begin{equation}
        \inner{\bfV,\bfW}:= \tr(\bfV^\textsf{T} \bfW),  \quad
        \calR_\sfK(\bfV) := \sfK + \bfV. \label{euclidean-retraction} 
\end{equation} 

With these basic ingredients of Riemannian optimization in place, the 
Riemannian Gradient Descent (RGD) of $f$ under $(\inner{.,.}, \calR)$ is defined as,
\begin{equation}\label{RGD}
    x_{t+1} := \calR_{x_t}(-s_t \nabla f(x_t)),
\end{equation} where $s_t \geq 0$ is a chosen step-size. Figure \ref{fig:RGD} is a visual depiction of the RGD procedure.
We recommend~\cite{boumal2023introduction, absil2008optimization} for optimization-oriented references on Riemannian manifolds; see also~\cite{lee2012smooth,lee2018introduction}.

For $S \in \GL_q$ (invertible $q\times q$ matrices), define the diffeomorphism $\scrT_S: \widetilde{\calC}_q \to \widetilde{\calC}_q$,
\begin{equation}\label{coord-trans}
    \scrT_S(\sfK) = \mat{0_{m \times p} & C_\sfK S^{-1} \\ SB_\sfK & SA_\sfK S^{-1}}.
\end{equation} We call (\ref{coord-trans}) a coordinate transformation due to the change of coordinates of the controller state: $\xi=S\eta$ (\ref{controller}). Abusing the notation, we have $d\scrT_S(\bfV) = \scrT_S(\bfV)$ since (\ref{coord-trans}) is linear.
A function $F$ on $\widetilde{\calC}_q \times \calV_q$ is called coordinate-\textit{invariant} when $F(\sfK, \bfV) = F(\scrT_S(\sfK), d\scrT_S(\bfV))$.

\begin{figure}
    \centering
    \begin{overpic}[width=.9\linewidth]{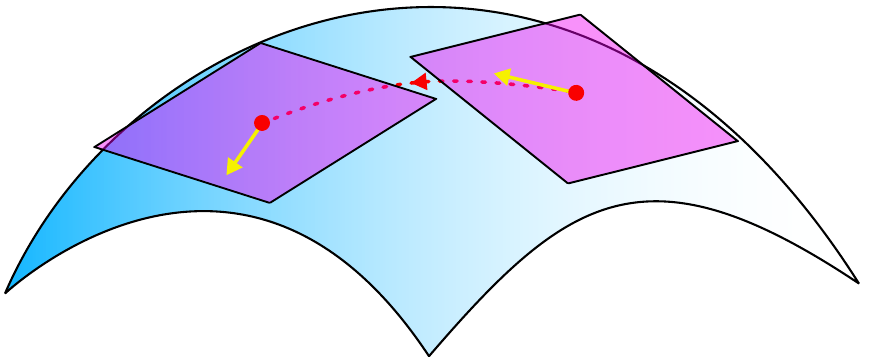}
        \put(152,68){\color{red} $x_1$}
        \put(72,53){\color{red} $x_2$}
        \put(130,58){\color{yellow} \small $-\nabla f(x_1)$}
        \put(27,56){\color{yellow} \small $-\nabla f(x_2)$}
        \put(175,80){\color{violet} $T_{x_1} \calM$}
        \put(20,74){\color{violet} $T_{x_2} \calM$}
        \put(50,27){\color{blue} $\calM$}
    \end{overpic}
    \caption{Visualization of RGD; here, $x_2 = \calR_{x_1}(-s_1\nabla f(x_1))$.}
    \label{fig:RGD}
\end{figure}

\section{Direct PO for LQG}\label{sect:alg}

In this section, we introduce our proposed first-order optimizer for LQG. Algorithm \ref{alg:RGD} is RGD over the domain of $\widetilde{\calC}_n^\co$. The optimizer uses the Euclidean retraction and the Riemannian metric defined in \S\ref{sect:KM-metric}. We optimize over $\widetilde{\calC}_n^\co$ instead of $\widetilde{\calC}_n$ for two reasons. First, the orbit space of $\widetilde{\calC}^\co_n$ modulo coordinate transformation admits a \textit{quotient manifold structure}. This is not the case for $\widetilde{\calC}_n$, whose quotient space is non-Hausdorff \cite{hazewinkel1980fine}. Second, the metric is coordinate-invariant, and hence 
\begin{equation}\label{coord-equiv}
    \scrT_S(\sfK -  s \nabla J_n(\sfK)) = \scrT_S(\sfK) - s \nabla J_n(\scrT_S(\sfK)).
\end{equation} This also implies RGD over the $(n^2+nm+np)$-dimensional Riemannian manifold $\widetilde{\calC}^\co_n$ coincides with RGD over the much smaller $(nm+np)$-dimensional \textit{Riemannian quotient manifold}. This will be explained in detail in \S\ref{sect:rie-quo-man}.

\begin{algorithm}
    \small
    \begin{algorithmic}
        \caption{Riemannian Gradient Descent}\label{alg:RGD}
        \Require $\sfK_0 \in \widetilde{\calC}_n^\co$, $\epsilon > 0$, $T \in \bbN$, $s_t \geq 0$
        \State $\sfK \gets \sfK_0$, $t \gets 0$
        \While{$t \leq T$ \textbf{and} $\|\nabla J_n(\sfK)\|_\sfK \geq \epsilon$}
            \State $\sfK \gets \sfK - s_t \nabla J_n(\sfK)$
            \State $t \gets t + 1$
        \EndWhile
        
        \Return $\sfK$
    \end{algorithmic}
\end{algorithm}

A few remarks are in order. In the case where $\sfK^+ = \calR_\sfK(-s \nabla J_n(\sfK))$ is non-stabilizing, one has to choose a small enough positive $s_t$ in Algorithm \ref{alg:RGD}; 
analogously, when $\sfK^+$ is stabilizing yet non-minimal, one can perturb the step direction since the set difference $\widetilde{\calC}_n - \widetilde{\calC}_n^\co$ is measure zero (Lemma \ref{lem:generic}). Second, $\|.\|_\sfK$ is the Riemannian norm to be defined. Next, $\nabla J_n(\sfK)$ is computed via \eqref{g-coords} and (\ref{Riemannian-gradient}) in each iteration. It should be noted that given $\grad J(\sfK)$, computing $\nabla J(\sfK)$ is an $\calO((n^2 + nm + np)^3)$ operation; Cholesky decomposition can significantly reduce the per-iteration complexity. In this context, the global frame $(\bfE_i)$ is simply  a fixed basis of $\calV_n$.
 
Lastly, the differential of the LQG cost is,
\begin{equation*}
    d J_n|_\sfK(\bfV) = \tr \left(dQ_\cl|_\sfK(\bfV) X(\sfK) +Q_\cl(\sfK) dX_\sfK(\bfV) \right),
\end{equation*} where,
\begin{align*}
    dQ_\cl|_\sfK(\bfV) &= \mat{0_{n \times n} & 0_{n \times n} \\ 0_{n \times n} & G^\textsf{T}  R C_\sfK + C_\sfK^\textsf{T}  R G}, \\
    dW_\cl|_\sfK(\bfV) &= \mat{0_{n \times n} & 0_{n \times n} \\ 0_{n \times n} & F VB_\sfK^\textsf{T}  + B_\sfK V F^\textsf{T} }, \\
    dX_\sfK(\bfV) &= d \bbL_{\left(A_\cl(\sfK), W_\cl(\sfK)\right)}(d A_\cl(\bfV), d W_\cl|_\sfK(\bfV)). \label{dX} 
\end{align*}

\subsection{Krishnaprasad-Martin Metric}\label{sect:KM-metric}

Let $\sfK \in \widetilde{\calC}^\co_n$ and $\mathbf{V} \in \calV_n$ be a tangent vector. Define,
\begin{align*}
    \bfE(\bfV) &:= \mat{0_{n \times n} & B G \\ F C & E}, &
    \bfF(\bfV) &:=  \mat{0_{n \times n} & 0_{n \times p} \\ 0_{n \times n} & F} \\
    \bfG(\bfV) &:= \mat{0_{p \times n} & 0_{p \times n} \\ 0_{m \times n} & G}.
\end{align*} Next, let $W_c(\sfK)$ and $W_o(\sfK)$ denote the controllability and observability Grammians of $(A_\cl(\sfK), B_\cl(\sfK),C_\cl(\sfK))$. Consider now the following Riemannian metric,
\begin{subequations}\label{KM-metric}
    \small
    \begin{align}
        \inner{\bfV_1,\bfV_2}_\sfK^\KM := &c_1 \, \tr[W_o(\sfK) \,\bfE(\bfV_1) \, W_c(\sfK) \, \bfE(\bfV_2)^\textsf{T} ] \\
        &+c_2 \, \tr[\bfF(\bfV_1)^\textsf{T} \, W_o(\sfK) \, \bfF(\bfV_2)] \\
        &+ c_3 \, \tr[\bfG(\bfV_1)\, W_c(\sfK) \, \bfG(\bfV_2)^\textsf{T} ],
    \end{align}
\end{subequations} where $c_1>0$, $c_2,c_3 \geq 0$ are constants. 

This metric was derived from a similar setup in \cite{krishnaprasad1983families,krishnaprasad1977geometry} for stable linear systems. In literature, the original metric is called the Krishnaprasad-Martin (KM) metric \cite{afsari2017bundle}. Although we have slightly augmented the KM metric in this work, we will keep the original name.

\subsection{Coordinate-invariance of the KM Metric}\label{sect:coord-inv-KM-metric}

In this section, we will prove that the above mentioned properties of the KM metric. 

\begin{lemma}\label{lem:closed-loop}
    For the system $(A,B,C)$ in (\ref{plant}) and $\sfK \in \widetilde{\calC}_q^\co$, the triplet $(A_\cl(\sfK), B_\cl(\sfK), C_\cl(\sfK))$ is minimal. 
\end{lemma}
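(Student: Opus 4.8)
The plan is to show that the closed-loop realization $(A_\cl(K), B_\cl(K), C_\cl(K))$ is both controllable and observable by exploiting the block structure of these matrices together with the minimality of $K$ and the standing assumptions on $(A,B,C)$. Recall that with $K$ as in (\ref{block-form}), we have
\[
A_\cl(K) = \mat{A & B C_K \\ B_K C & A_K}, \quad B_\cl(K) = \mat{I_n & 0 \\ 0 & B_K}, \quad C_\cl(K) = \mat{C & 0 \\ 0 & C_K}.
\]
First I would establish controllability. The key observation is that the first block column of $B_\cl(K)$ already contains $I_n$, so the reachable subspace contains everything of the form $A_\cl(K)^j \mat{I_n \\ 0}^\intercal$-spanned vectors; iterating $A_\cl(K)$ against $\mat{e_i \\ 0}$ produces $\mat{A e_i \\ B_K C e_i}$ and then $\mat{\ast \\ (A_K B_K C + B_K C A) e_i + \cdots}$. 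The cleanest route is a Popov–Belevitch–Hautus (PBH) argument: suppose $z^\intercal = \mat{z_1^\intercal & z_2^\intercal}$ is a left eigenvector of $A_\cl(K)$ with $z^\intercal B_\cl(K) = 0$; the first block of the latter gives $z_1 = 0$, so the eigenvector equation collapses to $z_2^\intercal A_K = \lambda z_2^\intercal$ and $z_2^\intercal B_K C = 0$, hence $z_2^\intercal B_K = 0$ (using that $C$ has full row rank, which follows from observability of $(A,C)$ — or more carefully, one argues directly). Then $(A_K, B_K)$ controllable forces $z_2 = 0$, contradicting $z \neq 0$.

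Next I would establish observability of $(A_\cl(K), C_\cl(K))$ by the dual PBH argument: if $\mat{x_1 \\ x_2}$ is a right eigenvector of $A_\cl(K)$ annihilated by $C_\cl(K)$, then $C x_1 = 0$ and $C_K x_2 = 0$; feeding this into the eigenvector equation $A_\cl(K)\mat{x_1 \\ x_2} = \lambda \mat{x_1 \\ x_2}$ gives $A x_1 + B C_K x_2 = \lambda x_1$, i.e. $A x_1 = \lambda x_1$ since $C_K x_2 = 0$, together with $B_K C x_1 + A_K x_2 = \lambda x_2$, i.e. $A_K x_2 = \lambda x_2$ since $C x_1 = 0$. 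Observability of $(A,C)$ kills $x_1$, and observability of $(A_K, C_K)$ kills $x_2$. The delicate point requiring care is the step in the controllability argument where one passes from $z_2^\intercal B_K C = 0$ to $z_2^\intercal B_K = 0$: this is immediate if $C$ has full row rank, but observability of $(A,C)$ does not by itself guarantee $C$ is full-rank. I would instead avoid this by noting that $z^\intercal B_\cl(K)$ also annihilates all $A_\cl(K)^k B_\cl(K)$; using the second block column $\mat{0 \\ B_K}$ and the structure of powers of $A_\cl(K)$, one recovers $z_2^\intercal B_K = 0$ directly, or alternatively one invokes the fact that genericity/minimality of the interconnection can be handled by the same eigenvalue bookkeeping.

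Thus the main obstacle is purely bookkeeping: keeping the two coupled block equations straight in each PBH computation and making sure the rank/full-row-rank subtleties of $C$ and $B$ are not silently assumed. Once the eigenvector equations are correctly reduced to the decoupled pair of conditions on $(A,B,C)$ and on $(A_K,B_K,C_K)$, minimality of $K$ (controllability and observability of $(A_K,B_K,C_K)$) together with the controllability of $(A,B)$ and observability of $(A,C)$ from the standing assumptions closes both arguments, and hence $(A_\cl(K), B_\cl(K), C_\cl(K))$ is minimal.
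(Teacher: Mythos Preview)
Your approach is exactly the one the paper indicates (a PBH argument for both controllability and observability), and the observability half is clean. The only wrinkle is that your ``main obstacle'' is a phantom: from $z^\intercal B_\cl(K)=0$ the \emph{second} block column already gives $z_2^\intercal B_K=0$ directly, so there is no need to pass through $z_2^\intercal B_K C=0$ or worry about the row rank of $C$; with that simplification the controllability PBH reduces immediately to $z_2^\intercal A_K=\lambda z_2^\intercal$ and $z_2^\intercal B_K=0$, and minimality of $K$ finishes.
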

\begin{proof}
    This follows from the Popov-Belevitch-Hautus test and is omitted for brevity; see also \cite[Lem. 4.5]{tang2021analysis}.
\end{proof}
\begin{theorem}\label{thm:KM-metric}
    The mapping defined in (\ref{KM-metric}) is a Riemannian metric and coordinate-invariant.
\end{theorem}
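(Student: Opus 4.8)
The plan is to establish two things: (i) that $\inner{\cdot,\cdot}^\KM_K$ is a genuine Riemannian metric—i.e., bilinear (clear from the trace formulas), symmetric, smooth in $K$, and positive-definite on $\calV_n$—and (ii) that it satisfies the coordinate-invariance identity \eqref{coord-invar} under every $\calT_S$, $S \in \GL(n)$. Symmetry is immediate since each summand is a trace of the form $\tr(M_1 N M_2^\intercal L)$ with $\calW_o,\calW_c$ symmetric; smoothness follows because $A_\cl,B_\cl,C_\cl$ depend smoothly (in fact polynomially) on $K$, the Grammians $\calW_c(K)=\bbL(A_\cl(K),B_\cl(K)B_\cl(K)^\intercal)$ and $\calW_o(K)=\bbL(A_\cl(K)^\intercal,C_\cl(K)^\intercal C_\cl(K))$ are smooth compositions of $\bbL$ with these (valid since $A_\cl(K)$ is Hurwitz on $\tilde\calC^\co_n$), and the maps $\hat\bfE,\hat\bfF,\hat\bfG$ are linear in $\bfV$ with $K$-independent (indeed constant) coefficients except for the $BG$, $FC$ blocks which involve only the fixed plant data.

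The crux is positive-definiteness. Suppose $\inner{\bfV,\bfV}^\KM_K = 0$. Since $w_1>0$ and $\calW_o(K),\calW_c(K)\succ 0$—here I invoke Lemma \ref{lem:closed-loop}, which guarantees $(A_\cl(K),B_\cl(K),C_\cl(K))$ is minimal, hence both Grammians are positive definite—the first term forces $\hat\bfE(\bfV)=0$, i.e. $BG=0$, $FC=0$, and $E=0$. By the standing controllability of $(A,B)$ and observability of $(A,C)$, $B$ has full column rank and $C$ has full row rank, so $BG=0 \Rightarrow G=0$ and $FC=0 \Rightarrow F=0$; combined with $E=0$ this gives $\bfV=0$. (Thus positive-definiteness already holds with just $w_1>0$, which is why $w_2,w_3\ge 0$ suffices; the extra terms only reshape the metric.)

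For coordinate-invariance, the key observation is how the ingredients transform under $\calT_S$. A direct computation shows $A_\cl(\calT_S(K)) = \calS A_\cl(K)\calS^{-1}$, $B_\cl(\calT_S(K)) = \calS B_\cl(K)\,\Theta^{-1}$, $C_\cl(\calT_S(K)) = \Xi\, C_\cl(K)\calS^{-1}$, where $\calS := \mathrm{diag}(I_n, S)$ and $\Theta,\Xi$ are the appropriate identity-block matrices (the input/output channels are untouched), so that the closed loop undergoes a state-space similarity by $\calS$. Consequently $\calW_c(\calT_S(K)) = \calS\,\calW_c(K)\,\calS^\intercal$ and $\calW_o(\calT_S(K)) = \calS^{-\intercal}\,\calW_o(K)\,\calS^{-1}$, using uniqueness of the Lyapunov solution. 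One checks similarly that $\hat\bfE(d\calT_S(\bfV)) = \calS\,\hat\bfE(\bfV)\,\calS^{-1}$, $\hat\bfF(d\calT_S(\bfV)) = \calS\,\hat\bfF(\bfV)\,\Theta^{-1}$, $\hat\bfG(d\calT_S(\bfV)) = \Xi\,\hat\bfG(\bfV)\,\calS^{-1}$. Substituting into the first summand, the $\calS$ and $\calS^{-1}$ factors telescope inside the trace: $\tr[\calS^{-\intercal}\calW_o \calS^{-1}\cdot \calS\hat\bfE_1\calS^{-1}\cdot \calS\calW_c\calS^\intercal\cdot \calS^{-\intercal}\hat\bfE_2^\intercal\calS^\intercal] = \tr[\calW_o\hat\bfE_1\calW_c\hat\bfE_2^\intercal]$ by cyclicity; the second and third summands collapse the same way. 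Hence each term is invariant, proving \eqref{coord-invar}.

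I expect the main obstacle to be purely bookkeeping: verifying the transformation laws for $A_\cl,B_\cl,C_\cl$ and for $\hat\bfE,\hat\bfF,\hat\bfG$ under $\calT_S$ requires carefully tracking which blocks get conjugated by $S$ versus left fixed, and confirming that $d\calT_S = \calT_S$ (noted in the text) is applied consistently to the tangent argument. Once the similarity structure of the closed loop is pinned down, the Grammian transformation and the trace cancellation are routine; no genuinely hard estimate is involved.
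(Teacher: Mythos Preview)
Your coordinate-invariance argument mirrors the paper's essentially line for line: both identify the block-diagonal similarity $\hat S = \mat{I_n & 0 \\ 0 & S}$ acting on the closed loop, record how $\calW_c,\calW_o$ and the maps $\hat\bfE,\hat\bfF,\hat\bfG$ transform under it, and then let the conjugating factors cancel inside the trace.

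There is, however, a genuine gap in your positive-definiteness step. You assert that controllability of $(A,B)$ forces $B$ to have full column rank and that observability of $(A,C)$ forces $C$ to have full row rank; neither implication holds. For instance, with $A=\mat{0&1\\0&0}$ and $B=\mat{0&0\\1&0}$ the pair $(A,B)$ is controllable while $\rank\,B=1<2$. Without the rank condition your deduction ``$BG=0\Rightarrow G=0$'' fails: taking any nonzero $G$ whose columns lie in the nontrivial kernel of $B$, together with $E=0$ and $F=0$, produces a nonzero $\bfV$ with $\hat\bfE(\bfV)=\hat\bfF(\bfV)=0$, and when $w_3=0$ one obtains $\inner{\bfV,\bfV}^\KM_K=0$. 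Hence the claim that ``positive-definiteness already holds with just $w_1>0$'' is not a consequence of the standing controllability and observability hypotheses alone; one needs either $w_2,w_3>0$ strictly (so that the second and third summands independently force $F=0$ and $G=0$) or an additional assumption that $B$ has full column rank and $C$ full row rank. The paper's own proof is terse at this point and does not isolate the issue, but your write-up makes the faulty implication explicit.
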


\begin{proof}
    We first note that for any $\sfK$, the mapping $\inner{.,.}_\sfK^\KM$ is smooth, bi-linear, and symmetric. Therefore, it suffices to show its positive-definiteness. By Lemma \ref{lem:closed-loop} and \cite[Thm. 12.4]{hespanha2018linear}, the Grammians satisfy,
    \begin{align*}
        W_c(\sfK) &= \bbL(A_\cl(\sfK), B_\cl(\sfK)B_\cl(\sfK)^\textsf{T} ) \in \bbS_{++}^{2n}, \\
        W_o(\sfK) &= \bbL(A_\cl(\sfK)^\textsf{T} , C_\cl(\sfK)^\textsf{T} C_\cl(\sfK)) \in \bbS_{++}^{2n}.
    \end{align*} Hence, (\ref{KM-metric}) is positive-definite.
    
    Now, we will show that the KM metric is coordinate-invariant. Let $S \in \GL_n$ and $L:= \scrT_S(\sfK)$. Then 
    \begin{equation*}
        (A_\cl(\sfL), B_\cl(\sfL), C_\cl(\sfL)) = \scrT_{\hat{S}}(A_\cl(\sfK), B_\cl(\sfK), C_\cl(\sfK)),
    \end{equation*} where $\hat{S} := \text{diag}(I_n,S)$. It follows that,
    \begin{align*}
        A_\cl(\sfL) &= \hat{S} A_\cl(\sfK) \hat{S}^{-1}, \\      B_\cl(\sfL)B_\cl(\sfL)^\textsf{T}  &= \hat{S}B_\cl(\sfK) B_\cl(\sfK)^\textsf{T} \hat{S}^\textsf{T}, \\ 
        C_\cl(\sfL)^\textsf{T} C_\cl(\sfL) &= \hat{S}^{-\textsf{T}} C_\cl(\sfK)^\textsf{T} C_\cl(\sfK) \hat{S}^{-1} \\
        W_c(\sfL) &= \hat{S} W_c(\sfK) \hat{S}^\textsf{T} \\
        W_o(\sfL) &= \hat{S}^{-\textsf{T}} W_o(\sfK) \hat{S}^{-1} 
    \end{align*} We also have,
        \begin{align*}
            \bfE(d\scrT_S(\bfV)) &= \hat{S} \bfE(\bfV) \hat{S}^{-1} \\
            \bfF(d \scrT_S(\bfV)) &= \hat{S} \bfF(\bfV) \\
            \bfG(d \scrT_S(\bfV)) &= \bfG(\bfV) \hat{S}^{-1};
        \end{align*} Plug the above into (\ref{KM-metric}) to conclude the proof.
\end{proof}

\section{Orbit Space of Output-feedback Controllers}\label{sect:rie-quo-man}

In this section, we go over key features of Riemannian quotient manifolds in order to set the machinery necessary for the proof of local linear convergence of the proposed PO for LQG in \S\ref{sect:conv-anal}. We suggest referring to \cite{boumal2023introduction,absil2008optimization} for the salient features of such a construction.

Let $\widetilde{\calM}$ be a smooth manifold with group action $\calG$. For example, the family of coordinate transformations $\{\scrT_S(\cdot) : S \in \GL_q\} \equiv \GL_q$ is a group action over $\widetilde{\calC}_q$. The orbit space of $\widetilde{\calM}$ modulo $\calG$ is the collection of all orbits $[x] := \{y \in \widetilde{\calM}:\exists g \in \calG, g(x)=y\}$ under the quotient topology:
\begin{equation*}
    \calM \equiv \widetilde{\calM}/\calG :=\{[x]:x \in \widetilde{\calM}\}.
\end{equation*} See Figure \ref{fig:quotient} for a visual depiction. We say $U \subset \calM$ is $\calG$-stable if $x \in U$ implies $[x] \subset U$. In this context, $\dim(\calM) = \dim(\widetilde{\calM}) - \dim(\calG)$; this is particularly desirable for optimization due to the reduced search space dimension.


Let $\calV_x:= \ker d \pi_x$ be the tangent space of $[x]$ at $x$. Here, $d\pi_x$ is the differential of the quotient map $\pi(x):=[x]$. Next, let $\calH_x:= \calV_x^\perp$.
Since $d\pi_x|_{\calH_x}:\calH_x \to T_{[x]}\calM$ is a bijection, we identify $\xi \in T_{[x]}\calM$ with $\lift_x(\xi):= (d\pi_x|_{\calH_x})^{-1}(\xi) \in \calH_x$. 

\begin{figure}[t]
    \centering
    \begin{overpic}[width=.85\linewidth]{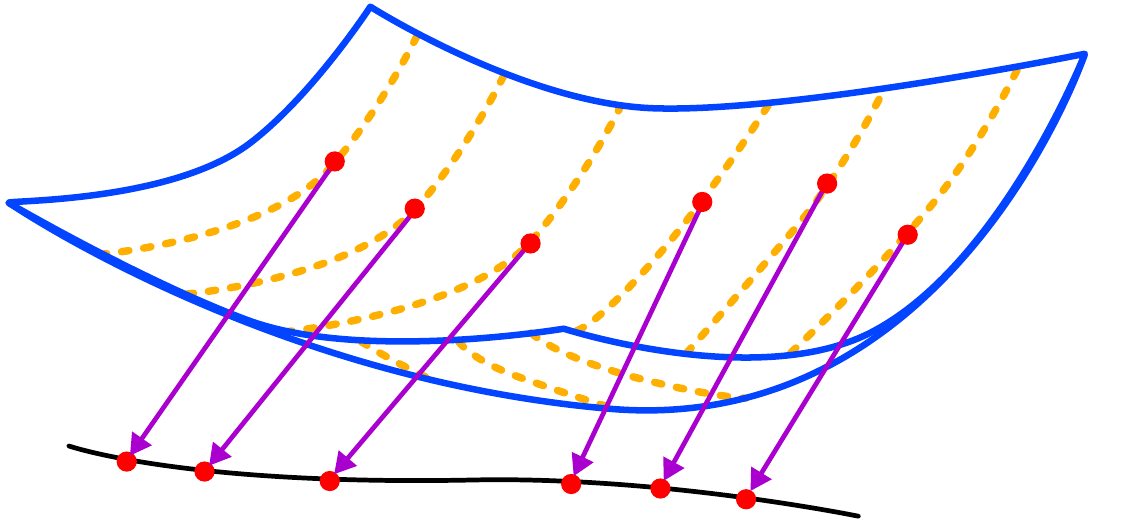}
        \put(70,68){\color{red} $x_1$}
        \put(86,59){\color{red} $x_2$}
        \put(110,53){\color{red} $x_3$}
        \put(137,60){\color{red} $x_4$}
        \put(165,64){\color{red} $x_5$}
        \put(177,57){\color{red} \small $x_6$}
        \put(17,5){\color{red} \small $[x_1]$}
        \put(33,3){\color{red} \small $[x_2]$}
        \put(58,2){\color{red} \small $[x_3]$}
        \put(105,0){\color{red} \small $[x_4]$}
        \put(123,-1){\color{red} \small $[x_5]$}
        \put(141,-3){\color{red} \small $[x_6]$}
        \put(160,15){\color{violet} $\pi:\widetilde{\calM} \to \calM$}
        \put(30,80){\color{blue} $\widetilde{\calM}$}
        \put(82,94){\color{orange} \small $[x_1]$}
        \put(100,90){\color{orange} \small $[x_2]$}
        \put(119,87){\color{orange} \small $[x_3]$}
        \put(146,88){\color{orange} \small $[x_4]$}
        \put(167,90){\color{orange} \small $[x_5]$}
        \put(188,93){\color{orange} \small $[x_6]$}
        \put(10,20){$\calM$}
    \end{overpic}
    \caption{Illustration of a manifold and its orbit space.}
    \label{fig:quotient}
\end{figure}

Convergence analysis for the proposed RGD procedure involves showing that $\calC_n^\co:=\widetilde{\calC}_n^\co/\GL_n$ is a smooth quotient manifold and inherits a Riemannian metric and retraction from the KM metric and Euclidean retraction. We state these results and omit their proofs for brevity. 

The quotient manifold structure of $\calC_n^\co$ follows from the remarkable theorem that the quotient space of minimal system realizations with $p$ inputs and $m$ outputs forms a smooth quotient manifold \cite{hazewinkel1980fine}. Since $\widetilde{\calC}_n^\co$ is open and $\GL_n$-stable, our claim is a consequence of the following result.
\begin{lemma}\label{lem:smooth-quotient-structure}
    If $\widetilde{\calM}/\calG$ is a quotient manifold and $\widetilde{\calN} \subset \widetilde{\calM}$ is open and $\calG$-stable, then $\widetilde{\calN}/\calG$ is a quotient manifold.
\end{lemma}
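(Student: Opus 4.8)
\emph{Proof plan.} The strategy is to transport the smooth structure of $\calM$ to an open subset and then check that this open subset \emph{is} the quotient $\calN=\tilde{\calN}/G$. Write $\pi:\tilde{\calM}\to\calM$ for the quotient map, which by hypothesis is a smooth submersion onto the quotient manifold $\calM$, and $\pi_{\tilde{\calN}}:\tilde{\calN}\to\calN$ for the quotient map of the restricted $G$-action. First I would recall the elementary fact that $\pi$ is an open map: for any open $U\subset\tilde{\calM}$ one has $\pi^{-1}(\pi(U))=\bigcup_{g\in G} g\cdot U$, which is open since each $g$ acts as a diffeomorphism, so $\pi(U)$ is open by definition of the quotient topology. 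Applying this with $U=\tilde{\calN}$, and using $G$-stability in the form $\pi^{-1}(\pi(\tilde{\calN}))=\tilde{\calN}$, shows that $\calU:=\pi(\tilde{\calN})$ is open in $\calM$; as an open subset of a (Hausdorff, second-countable) manifold it is itself a smooth manifold, the open submanifold $\calU\subset\calM$.

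Next I would identify $\calN$ with $\calU$ as topological spaces. Letting $\iota:\tilde{\calN}\hookrightarrow\tilde{\calM}$ be the inclusion, the composite $\pi\circ\iota$ is constant on $G$-orbits, so the universal property of the quotient yields a continuous injection $\bar{\iota}:\calN\to\calM$ with image $\calU$ and $\bar{\iota}\circ\pi_{\tilde{\calN}}=\pi\circ\iota$. To see $\bar{\iota}$ is a homeomorphism onto $\calU$, take $W\subset\calN$ open; then $\pi_{\tilde{\calN}}^{-1}(W)$ is open in $\tilde{\calN}$, hence open in $\tilde{\calM}$ (as $\tilde{\calN}\subsetop\tilde{\calM}$) and $G$-stable, so $\bar{\iota}(W)=\pi(\pi_{\tilde{\calN}}^{-1}(W))$ is open in $\calM$ by the openness of $\pi$. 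Thus $\bar{\iota}$ is an open continuous bijection onto $\calU$, i.e. a homeomorphism, and we use it to endow $\calN$ with the smooth structure of $\calU$.

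Finally I would verify that this smooth structure is the quotient manifold structure, namely that $\pi_{\tilde{\calN}}$ is a smooth submersion. Under the identification $\calN\cong\calU$, the map $\pi_{\tilde{\calN}}$ becomes the corestriction of $\pi$ to the open set $\tilde{\calN}$ in the domain and $\calU$ in the codomain; a submersion remains a submersion when restricted to an open subset of the domain and corestricted to an open subset of the codomain, so $\pi_{\tilde{\calN}}$ is a smooth submersion, and the uniqueness of the quotient manifold structure (smooth surjective submersions are quotient maps in the smooth category) completes the argument. I expect the only delicate point to be the topological identification in the middle paragraph — specifically, confirming that $G$-stability of $\tilde{\calN}$ is exactly what forces the quotient topology of $\tilde{\calN}/G$ to coincide with the subspace topology inherited from $\calM$; the rest is a routine transfer of structure. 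As an alternative route that avoids this bookkeeping, one can instead invoke Godement's criterion directly: freeness and properness of the $G$-action restrict from $\tilde{\calM}$ to the open $G$-stable subset $\tilde{\calN}$, so the quotient manifold theorem applies to $\tilde{\calN}$ verbatim and produces the same structure.
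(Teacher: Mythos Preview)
Your argument is correct and is the standard way to prove this lemma: identify $\calN$ with the open subset $\pi(\tilde{\calN})\subset\calM$ via the induced map $\bar{\iota}$, and then observe that $\pi_{\tilde{\calN}}$ is just the restriction of the submersion $\pi$ to an open set. The paper itself does not give a proof of this lemma---it explicitly states the result and omits the proof for brevity---so there is nothing to compare against; your write-up would serve perfectly well as the missing argument. One small caveat on your alternative route: invoking Godement's criterion (free and proper action) is valid in the paper's intended application, where $\GL(n)$ acts freely and properly on minimal realizations, but the lemma as stated only assumes $\tilde{\calM}/G$ is a quotient manifold in the submersion sense, so your primary argument is the one that matches the hypotheses exactly.
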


The inherited Riemannian structure on $\calC_n^\co$ is a result of the invariance properties of the KM metric and retraction:
\begin{align*}
    \inner{\xi, \eta}_{[\sfK]}^\KM := \inner{\lift_\sfK(\xi), \lift_\sfK(\eta)}_\sfK^\KM, \; 
    \calR_{[\sfK]}(\xi) := [\calR_\sfK(\lift_\sfK(\xi))],
\end{align*} where $\sfK \in \widetilde{\calC}_n^\co$ and $\xi,\eta \in T_{[\sfK]}\calC_n^\co$; see also~\cite[Eq. 9.27, 9.31]{boumal2023introduction}. This is the result of the following result for more  general settings.
\begin{lemma}
    Suppose $\widetilde{\calM}$ is equipped with a $\calG$-invariant Riemannian metric. Pick $x \in \widetilde{\calM}$, $g \in \calG$, and $\xi \in T_{[x]}\calM$. Then $dg_x(\lift_x(\xi)) = \lift_{g(x)}(\xi)$.
\end{lemma}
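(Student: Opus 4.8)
The plan is to unwind all three objects in the claim---$\lift_x$, $dg_x$, and the horizontal space $\calH_x$---and show the identity follows from two facts: that the group action permutes orbits trivially (so $dg_x$ maps vertical vectors to vertical vectors), and that $G$-invariance of the metric forces $dg_x$ to map horizontal vectors to horizontal vectors. First I would recall that $g:\tilde{\calM}\to\tilde{\calM}$ is a diffeomorphism with $\pi\circ g = \pi$, since $g(x)$ and $x$ lie in the same orbit, i.e., $[g(x)]=[x]$. Differentiating this identity at $x$ gives $d\pi_{g(x)}\circ dg_x = d\pi_x$ as linear maps $T_x\tilde{\calM}\to T_{[x]}\calM$. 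This is the engine of the whole argument.

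Next I would establish that $dg_x(\calH_x)=\calH_{g(x)}$. The inclusion into verticals is automatic: if $v\in\calV_x=\ker d\pi_x$, then by the differentiated identity $d\pi_{g(x)}(dg_x(v)) = d\pi_x(v)=0$, so $dg_x(\calV_x)\subseteq\calV_{g(x)}$; since $dg_x$ is a linear isomorphism and $\dim\calV_x=\dim\calV_{g(x)}$ (both equal $\dim G$), this is an equality $dg_x(\calV_x)=\calV_{g(x)}$. For the horizontal spaces, recall $\calH_x=\calV_x^\perp$ with respect to the Riemannian metric. Since the metric is $G$-invariant, $dg_x$ is a linear isometry from $(T_x\tilde{\calM},\inner{.,.}_x)$ onto $(T_{g(x)}\tilde{\calM},\inner{.,.}_{g(x)})$. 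An isometry maps the orthogonal complement of a subspace to the orthogonal complement of its image, so $dg_x(\calH_x)=dg_x(\calV_x^\perp)=(dg_x\calV_x)^\perp=\calV_{g(x)}^\perp=\calH_{g(x)}$.

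Finally I would verify the stated equality. Fix $\xi\in T_{[x]}\calM$ and set $w:=\lift_x(\xi)\in\calH_x$, so by definition $d\pi_x(w)=\xi$. Then $dg_x(w)\in\calH_{g(x)}$ by the previous step, and $d\pi_{g(x)}(dg_x(w)) = d\pi_x(w)=\xi$ by the differentiated identity. Since $\lift_{g(x)}(\xi)$ is by definition the \emph{unique} element of $\calH_{g(x)}$ mapped to $\xi$ by $d\pi_{g(x)}$ (recall $d\pi_{g(x)}|_{\calH_{g(x)}}$ is a bijection onto $T_{[g(x)]}\calM = T_{[x]}\calM$), we conclude $dg_x(w)=\lift_{g(x)}(\xi)$, which is exactly $dg_x(\lift_x(\xi))=\lift_{g(x)}(\xi)$.

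The argument is essentially formal once the pieces are in place; the only point requiring a little care---the main obstacle, such as it is---is the claim that a linear isometry carries orthogonal complements to orthogonal complements and that $dg_x$ restricts to an \emph{isomorphism} $\calV_x\to\calV_{g(x)}$ rather than merely a containment. The first is standard finite-dimensional linear algebra; the second uses that $g$ is a diffeomorphism (hence $dg_x$ invertible) together with $\dim\calV_x = \dim\calV_{g(x)}$, which holds because the orbits $[x]$ and $[g(x)]$ coincide as submanifolds, so their tangent spaces at the respective points have the same dimension. One should also note in passing that $G$-invariance of the metric is exactly the hypothesis that makes $\calH_x$ a $G$-equivariant choice of horizontal distribution; without it the lift would not intertwine with the action and the statement would fail.
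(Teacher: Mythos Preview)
Your argument is correct and is the standard proof of this fact. The paper itself omits the proof of this lemma (``We state these results and omit their proofs for brevity''), so there is nothing to compare against; your route via $\pi\circ g=\pi$, the induced equality $dg_x(\calV_x)=\calV_{g(x)}$, and the isometry property $dg_x(\calH_x)=\calH_{g(x)}$ from $G$-invariance is exactly the expected one.
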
 

Intuitively, this also implies (\ref{coord-equiv}) and that performing RGD over the higher-dimensional $\widetilde{\calC}_n^\co$ coincides with its performance over the lower-dimensional $\calC_n^\co$ \cite[Sect. 9.9]{boumal2023introduction}.

\section{Convergence Analysis}\label{sect:conv-anal}

In this section, we present a convergence analysis for our algorithm. Let $J_n$ and $\widetilde{J}_n$ denote the LQG cost over $\calC_n^\co$ and $\widetilde{\calC}_n$, respectively. We make the following assumption on the plant~(\ref{plant}).\footnote{A\ref{assump:lqg} states that if we perturb $\sfK^*$ along $\bfV$, then the 2nd-order approximation of $\widetilde{J}$ is unchanged if and only if $\bfV \in \calV_{\sfK^*}$. We have found that this assumption empirically holds for randomly generated plants (\ref{plant}).}
\begin{assumption}\label{assump:lqg}
    The LQG cost admits an optimal controller $\sfK^*$ that is minimal and $\ker \Hess \widetilde{J}_n(\sfK^*) = \calV_{\sfK^*}$.
\end{assumption}

\begin{theorem} \label{main-thm}
     Consider the LQG PO of plant (\ref{plant}) under A\ref{assump:lqg}. Then, there exists a neighborhood $\calU \subset \calC_n^\co$ of $[\sfK^*]$ and $L>0$ such that given $[\sfK_0] \in \calU$, the resulting sequence $([\sfK_t])_{t \geq 0}$ via $\sfK^+=\calR_{\sfK}(-\frac{1}{L}\nabla \widetilde{J}_n(\sfK))$ stays in $\calU$ and converges to $[\sfK^*]$ with a linear rate.\footnote{Explicitly, $\lim_{t \to \infty} \|\xi_{t+1}\|/\|\xi_t\| < 1$, where $\calR_{[\sfK^*]}(\xi_t)=[\sfK_t]$.}
\end{theorem}

The key idea for this analysis is to build up two conditions in order to invoke \cite[Thm. 4.19]{boumal2023introduction}. We proceed to derive the results needed for realizing this strategy. The first condition requires $[\sfK^*]$ to have positive-definite \textit{Riemannian} Hessian. A\ref{assump:lqg} ensures this via the following result.
\begin{lemma}\label{lem:hess}
    Let $\nabla \widetilde{J}(\sfK')=0$ and $(s_-,s_0,s_+)$ be the signature of $\Hess \widetilde{J}_n(\sfK')$. Then $\nabla J_n([\sfK'])=0$ and the signature of $\nabla^2 J_n([\sfK'])$ is $(s_-, s_0 - n^2, s_+)$.
\end{lemma}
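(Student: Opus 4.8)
The plan is to use that, by construction, $\pi:\tilde{\calC}_n^\co\to\calC_n^\co$ is a Riemannian submersion for the induced KM metric with $\tilde{J}_n=J_n\circ\pi$, together with the fact that at a critical point a Hessian is a symmetric bilinear form that does not depend on the choice of metric/connection. First I would dispose of the gradient: since $d\pi_{K^*}$ is surjective and $d\tilde{J}_n|_{K^*}=dJ_n|_{[K^*]}\circ d\pi_{K^*}$, the hypothesis $\grad\tilde{J}_n(K^*)=0$ (equivalently $d\tilde{J}_n|_{K^*}=0$; here $\grad$ is the Euclidean gradient of the total-space cost) forces $dJ_n|_{[K^*]}=0$, i.e. $\nabla J_n([K^*])=0$. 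Hence both $K^*$ and $[K^*]$ are critical points, and for the remainder every Hessian may be treated as an intrinsic bilinear form via $\nabla^2 f(X,Y)=X(Yf)-df(\nabla_X Y)$ with the second term vanishing at criticality; in particular the KM-Riemannian Hessian of $\tilde{J}_n$ at $K^*$ agrees, as a bilinear form, with the Euclidean Hessian $\Hess\tilde{J}_n(K^*)$.

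The heart of the proof is locating the $n^2$ extra kernel directions. The LQG cost is coordinate-invariant: exactly as in the proof of Theorem \ref{thm:KM-metric}, $A_\cl,W_\cl,Q_\cl$ transform under $\calT_S$ by conjugation/congruence with $\hat{S}=\mathrm{diag}(I_n,S)$, so $J_n(K)=\tr(Q_\cl(K)X(K))$ is unchanged, i.e. $\tilde{J}_n$ is constant on each $\GL(n)$-orbit. Therefore $d\tilde{J}_n$ annihilates every vector field generated by a one-parameter subgroup of $\GL(n)$; differentiating that identity along an arbitrary vector field $W$ and evaluating at $K^*$, where $d\tilde{J}_n|_{K^*}=0$, gives $\Hess\tilde{J}_n(K^*)(W,v)=0$ for all $W$ and all $v$ in the span of these generators, which is precisely $\calV_{K^*}=\ker d\pi_{K^*}$. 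Since the quotient has dimension $\dim\tilde{\calC}_n^\co-\dim\GL(n)=nm+np$, we get $\dim\calV_{K^*}=n^2$, so $\calV_{K^*}$ lies in the radical of the bilinear form $\Hess\tilde{J}_n(K^*)$.

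It then remains to descend to $\calH_{K^*}$ and transport across $\pi$. Decomposing $T_{K^*}\tilde{\calC}_n^\co=\calV_{K^*}\oplus\calH_{K^*}$ and using that $\calV_{K^*}$ is in the radical, Sylvester's law of inertia shows $\Hess\tilde{J}_n(K^*)|_{\calH_{K^*}}$ retains the indices $s_-,s_+$ while its nullity drops to $s_0-n^2$, so its signature is $(s_-,s_0-n^2,s_+)$. Finally, for the Riemannian submersion $\pi$ with $J_n\circ\pi=\tilde{J}_n$, the standard correction term relating $\nabla^2(J_n\circ\pi)$ to $\nabla^2 J_n$ involves $dJ_n$ and hence vanishes at the critical point, yielding $\nabla^2 J_n([K^*])(\xi,\eta)=\nabla^2\tilde{J}_n(K^*)(\lift_{K^*}\xi,\lift_{K^*}\eta)$; since $\lift_{K^*}:T_{[K^*]}\calC_n^\co\to\calH_{K^*}$ is a linear isomorphism, $\nabla^2 J_n([K^*])$ is congruent to $\Hess\tilde{J}_n(K^*)|_{\calH_{K^*}}$ and has signature $(s_-,s_0-n^2,s_+)$.

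The routine parts are the gradient identity and the Sylvester bookkeeping. The step requiring genuine care — and the main obstacle — is the middle paragraph: showing that the \emph{entire} vertical space, not merely curve-directions along which $\tilde{J}_n$ is visibly constant, sits in the radical of the \emph{Euclidean} Hessian, which relies jointly on the global group-invariance $\tilde{J}_n\circ\calT_S=\tilde{J}_n$ and on $K^*$ being a critical point. A secondary point to state cleanly is the last paragraph's bridge between the Euclidean Hessian of $\tilde{J}_n$ (which appears in the statement) and the KM-Riemannian Hessian (needed so that the horizontal-lift isometry of the submersion applies): the two coincide only because $K^*$ is critical, and this equivalence should be invoked explicitly rather than left implicit.
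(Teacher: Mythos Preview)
Your proposal is correct and follows essentially the same approach as the paper: the paper's proof simply cites \cite[Prop.~8.71]{boumal2023introduction} for the metric-independence of the Hessian signature at a stationary point and \cite[Ex.~9.46]{boumal2023introduction} for the fact that the total-space Riemannian Hessian carries exactly $n^2$ additional zero eigenvalues beyond those of the quotient Hessian, whereas you unpack precisely these two facts from first principles (the connection-free Hessian formula at a critical point, the vertical space lying in the radical via the invariance $\tilde{J}_n\circ\calT_S=\tilde{J}_n$, and the submersion correction term vanishing when $dJ_n=0$). The mathematical content is identical; you have simply written out what the paper delegates to Boumal.
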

\begin{proof}
    At stationary points, the signature of the Hessian is invariant of the Riemannian metric~\cite[Prop. 8.71.]{boumal2023introduction}. Thereby, $\nabla^2 \widetilde{J}_n(\sfK^*)$ and $\Hess \widetilde{J}_n(\sfK^*)$ share the same signature. By \cite[Ex. 9.46.]{boumal2023introduction}, the eigenvalues of $\nabla^2 \widetilde{J}_n(\sfK^*)$ are identical to the eigenvalues of $\nabla^2 J_n([\sfK^*])$ with $n^2$ additional zeros, thus completing the proof.
\end{proof}
We note that in the ordinary GD setup, all stationary points of $\widetilde{J}_n$ have singular Hessians; as such, the above proof is unique for the proposed Riemannian quotient manifold setup.

The second condition requires constructing a domain $\calL_0~ \subset~\calC_n^\co$ on which our RGD procedure $F(\cdot)$ is well-defined and $F(\calL_0) \subset \calL_0$. For the former, we rely on a step-size bound (Lemma \ref{lem:stability-certificate}) and proceed to choose our domain $\calL_0$ sufficiently small so that $F(\cdot)$ is well-defined. In order to show $F(\calL_0) \subset \calL_0$, we present convexity-like (Lemma \ref{lem:convexity-lemma}) and Lipschitz-like (Lemma \ref{lem:lipschitz-inequality}) inequalities, useful for analysis of first-order methods on smooth manifolds. 

Let us first demonstrate that $\calR$-balls
about $[\sfK^*]$ satisfy a strong convexity-like inequality.
\begin{lemma}\label{lem:convexity-lemma}
    Let $(\calM,\inner{.,.},\calR)$ be a Riemannian manifold. Let $f:\calM \to \bbR$ be smooth with $\nabla f(x^*)=0$ and $\nabla^2 f(x^*)~>~0$ for some $x^* \in \calM$. Then there exists $\rho>0$ for which $\nabla^2 f > 0$ on $\calD := \overline{B_{x^*}(\rho)}$, and $M>0$ such that $\calD$ contains the unique connected component $\calL_0$ of the sublevel set $\calL(f,M):=\{y \in \calM:f(y) \leq M\}$ containing $x^*$.
\end{lemma}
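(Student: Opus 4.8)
The statement is essentially a standard Morse-theoretic fact: a strict local minimum with positive definite Hessian has a trapping sublevel set. The plan is to (i) use continuity of the Riemannian Hessian and a compactness argument to get a closed $\calR$-ball $\calD = \overline{B_{x^*}(\rho)}$ on which $\nabla^2 f$ remains positive definite; (ii) show $f$ attains a strictly larger value on the $\calR$-sphere $\partial := \{\calR_{x^*}(\xi) : \|\xi\|_{x^*} = \rho\}$ than at $x^*$, using the second-order Taylor expansion of $f \circ \calR_{x^*}$ at $0_{x^*}$; (iii) pick $M$ strictly between $f(x^*)$ and that minimum boundary value, and argue the connected component $\calL_0$ of $\{f \le M\}$ through $x^*$ cannot escape $\calD$.

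For step (i), I would work in a local coordinate chart around $x^*$ (or directly with the retraction $\calR_{x^*}$ as a local parametrization, since $d(\calR_{x^*})_{0_{x^*}} = \mathrm{id}$ is invertible). The map $y \mapsto \nabla^2 f(y)$, expressed as a symmetric bilinear form in a smoothly varying orthonormal-ish frame, is continuous, and its smallest eigenvalue at $x^*$ is some $\mu > 0$; by continuity there is $\rho_1 > 0$ so that on $\overline{B_{x^*}(\rho_1)}$ the smallest eigenvalue stays $\ge \mu/2 > 0$. (One must shrink $\rho_1$ further if needed so that $\calR_{x^*}$ restricted to $\{\|\xi\|_{x^*} \le \rho_1\}$ is a diffeomorphism onto its image and the ball is well-defined; this is where $\calR$ being only locally defined is handled — just take $\rho$ small.)

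For steps (ii)–(iii): let $g := f \circ \calR_{x^*}$ on a neighborhood of $0$ in $T_{x^*}\calM$. Then $g(0) = f(x^*)$, $\nabla g(0) = \nabla f(x^*) = 0$ (since $d(\calR_{x^*})_{0} = \mathrm{id}$), and $\nabla^2 g(0) = \nabla^2 f(x^*) > 0$ by the definition of the Riemannian Hessian via a retraction. Hence $g(\xi) \ge f(x^*) + \frac{\mu}{4}\|\xi\|_{x^*}^2$ for $\|\xi\|_{x^*}$ small; shrinking $\rho \le \rho_1$ so this holds on $\overline{B_{x^*}(\rho)}$, we get $f \ge f(x^*) + \frac{\mu}{4}\rho^2 =: m_0$ on the sphere $\|\xi\|_{x^*} = \rho$. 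Choose $M$ with $f(x^*) < M < m_0$. Now let $\calL_0$ be the connected component of $\{y : f(y) \le M\}$ containing $x^*$. If $\calL_0 \not\subset B_{x^*}(\rho)$, then since $\calL_0$ is connected and contains $x^*$, it must meet the compact boundary sphere (connectedness forbids a ``jump'' over it), giving a point there with $f \le M < m_0$, contradicting $f \ge m_0$ on the sphere. Therefore $\calL_0 \subset B_{x^*}(\rho) \subset \calD$, and $\nabla^2 f > 0$ on $\calD$ by step (i).

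\textbf{Main obstacle.} The delicate point is not the Morse argument — which is routine — but the bookkeeping around the retraction: $\calR$ is defined only on an open $\calS \subsetneq T\calM$ containing the zero section, so $B_{x^*}(\rho)$ and the sphere $\{\|\xi\|_{x^*} = \rho\}$ make sense only for small $\rho$, and one must ensure $\calR_{x^*}$ is a diffeomorphism from the closed ball $\{\|\xi\| \le \rho\}$ onto its image so that ``the connected component must cross the boundary sphere'' is a legitimate topological statement. Also one should be slightly careful that ``$f \ge m_0$ on the sphere'' uses the Taylor estimate for $g$ uniformly over directions, which is fine since the unit sphere in $T_{x^*}\calM$ is compact. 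All of this is handled by choosing $\rho$ sufficiently small at the outset and invoking the inverse function theorem once.
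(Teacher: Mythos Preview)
Your proposal is correct and follows essentially the same route as the paper: both arguments (i) shrink $\rho$ so that $\calD\subset\mathrm{dom}(\calR_{x^*})$ and $\nabla^2 f>0$ on $\calD$, (ii) use the second-order Taylor expansion of $f\circ\calR_{x^*}$ at $0_{x^*}$ to conclude $f>f(x^*)$ on the $\calR$-sphere, (iii) pick $M$ strictly between $f(x^*)$ and $\min f(\partial\calD)$, and (iv) finish with the connectedness argument that any path in $\calL_0$ from $x^*$ to a putative point outside $\calD$ would have to cross $\partial\calD$ at a value exceeding $M$. Your extra care about $\calR_{x^*}$ being a local diffeomorphism so that the ``must cross the boundary'' step is legitimate is a point the paper leaves implicit.
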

\begin{proof}
    Choose small enough $\rho>0$ so that $\calD \subset \text{dom}(\calR_{x^*})$ and $\nabla^2 f > 0$ on $\calD$. By \cite[Prop. 5.44]{boumal2023introduction},
    \begin{equation*}
        f(\calR_{x^*}(v)) = f(x^*) + \frac{1}{2} \inner{\nabla^2 f(x^*)v,v}_{x^*} + O(\|v\|_{x^*}^3).
    \end{equation*} Since $\nabla^2 f(x^*)>0$, then $f(\calR_{x^*}(v)) > f(x^*)$ for all $\|v\|_{x^*} \leq \rho$ for a small enough $\rho > 0$. It the follows that $f(x^*) < f(y)$ for all $y \in  \calD-\{x^*\}$.
    Pick $\epsilon > 0$ small enough such that $M:=\min f(\partial \calD)-\epsilon> f(x^*)$. Let $y \in \calL_0$ and $c:[0,1] \to \calL_0$ be any curve from $x^*$ to $y$ in $\calL_0$. If $c(t) \in \calD$ for only $0 \leq t \leq t_{\max} < 1$, then $f(c(t_{\max})) \geq M + \epsilon >M$, a contradiction. Hence, $y \in \calD$.
\end{proof}

Let $\rho>0$ be small enough so that $\widetilde{\calD} := \overline{B_{\sfK^*}(\rho)} \subset \widetilde{\calC}_n^\co$. Since $\calD := \overline{B_{[\sfK^*]}(\rho)} \subsetneq \pi(\widetilde{\calD})$, ensure that $\rho>0$ is small enough such that $\calD$ satisfies the constraints in Lemma \ref{lem:convexity-lemma}. This lemma grants us $M>0$ and $\calL_0 \subset \calD \subset \calC_n$. 

Next, we construct a lower bound on the stability radius for $\widetilde{\calC}_n^\co$ \cite{talebi2022policy}.
\begin{lemma}\label{lem:stability-certificate}
    Define the stability certificate,
    \begin{equation*}
        s(\sfK,\bfV) := (2\|A_\cl(\bfV)\|_2 \, \lambdamax(\bbL(A_\cl(\sfK),I_{2n})))^{-1}>0,
    \end{equation*} where $\lambdamax(\cdot)$ denotes the largest eigenvalue of its (symmetric) matrix argument. Then $\calR_\sfK(t\bfV) \in \widetilde{\calC}_n$ for $t \in [0,s(\sfK,\bfV))$.
\end{lemma}
\begin{proof}
    Set $P:= \bbL(A_\cl(\sfK), I_{2n})$. Then 
    \begin{equation*}
        t \, \lambdamax(A_\cl(\bfV) P + P A_\cl^\textsf{T}(\bfV)) \leq 2t \, \|A_\cl(\bfV)\|_2 \, \lambdamax(P) < 1,
    \end{equation*} and hence $t \, (A_\cl(\bfV)P + P A_\cl(\bfV)^\textsf{T}) \prec I_{2n}$.\footnote{For a pair of symmetric matrices, the notation $A \prec B$ denotes the positive-definiteness of $B-A$.} Since $A_\cl(\cdot)$ is linear and $A_\cl(\sfK)P+PA_\cl(\sfK)^\textsf{T} = -I_{2n}$, we have $A_\cl(\sfK^+)P + PA_\cl(\sfK^+)^\textsf{T} \prec 0$, where $\sfK^+= \sfK + t\bfV$.  
\end{proof}

Now, we will guarantee a Lipschitz-like inequality. Define $r(\sfK) := \frac{1}{2}\|\nabla \widetilde{J}_n(\sfK)\|_\sfK^{-1}  \min_{\|\bfV\|_\sfK = 1} s(\sfK,\bfV)$. 
\begin{lemma}\label{lem:lipschitz-inequality}
    Let $\calK \subset \widetilde{\calC}_n^\co$ be compact. Define $\calS:=\{(\sfK,\bfV) \in T \widetilde{\calC}_n^\co : \sfK \in \calK,\|\bfV\|_\sfK \leq r(\sfK)\}$. Next, define $\calS^* := \{(\scrT_S(\sfK), d\scrT_S(\bfV)) : (\sfK,\bfV) \in \calS, S \in \GL_n\}$. Then there exists $L > 0$ where for all $(\sfK,\bfV) \in \calS^*$,
    \begin{equation}\label{lip}
        \widetilde{J}_n(\calR_\sfK(\bfV)) \leq \widetilde{J}_n(\sfK) + \inner{\nabla \widetilde{J}_n(\sfK), \bfV}_\sfK + \frac{L}{2}\|\bfV\|_\sfK^2.
    \end{equation}
\end{lemma}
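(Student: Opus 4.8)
The goal is a Lipschitz-type bound for $\tilde J_n$ along Euclidean-retraction curves, uniform over a $\GL(n)$-saturation $\calT^*$ of a compact "base" set $\calT$. Since $\calR_K(\bfV) = K + \bfV$, inequality \eqref{lip} is just a second-order Taylor-with-remainder statement, so the natural strategy is: (i) establish \eqref{lip} on $\calT$ itself with some constant $L_0$ via compactness and a uniform Hessian bound; (ii) upgrade from $\calT$ to $\calT^*$ using the coordinate-invariance of the KM metric (Theorem \ref{thm:KM-metric}) together with the coordinate-equivariance of the Euclidean retraction and of $\nabla\tilde J_n$, which shows the entire inequality is $\GL(n)$-invariant term-by-term, so the same $L_0$ works on all of $\calT^*$. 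The role of $r(K)$ is to guarantee, via the stability certificate (Lemma \ref{lem:stability-certificate}), that the whole segment $\{K + t\bfV : t\in[0,1]\}$ stays in $\tilde\calC_n$ (and then, shrinking slightly or invoking Lemma \ref{lem:generic}, in $\tilde\calC_n^\co$), so $\tilde J_n$ is actually smooth along it.

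\textbf{Step-by-step.} First I would fix the compact base set. Let $\calT = \{(K,\bfV): K\in\calK,\ \|\bfV\|_K \le r(K)\}$. Note $r(\cdot)$ is continuous and positive on the compact set $\calK$ (the map $K\mapsto \min_{\|\bfV\|_K=1} s(K,\bfV)$ is continuous and bounded below by a positive constant by compactness, as is $\|\nabla\tilde J_n(K)\|_K$), so $\calT$ is compact. For $(K,\bfV)\in\calT$ with $\|\bfV\|_K \le r(K) \le \tfrac12 s(K,\bfV/\|\bfV\|_K)$, Lemma \ref{lem:stability-certificate} gives $K + t\bfV \in \tilde\calC_n$ for all $t\in[0,1]$; enlarging $\calK$ to a slightly larger compact set inside $\tilde\calC_n^\co$ and using openness/genericity (Lemma \ref{lem:generic}) I can assume the segment stays in $\tilde\calC_n^\co$, hence $t\mapsto \tilde J_n(K+t\bfV)$ is smooth on a neighborhood of $[0,1]$. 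Apply Taylor's theorem with integral remainder to $\phi(t) := \tilde J_n(K + t\bfV)$ at $t=0$, evaluated at $t=1$:
\begin{equation*}
    \tilde J_n(K+\bfV) = \tilde J_n(K) + \inner{\grad\tilde J_n(K), \bfV}_{\mathrm{euc}} + \int_0^1 (1-t)\, \mathrm{Hess}\,\tilde J_n(K+t\bfV)[\bfV,\bfV]\, dt.
\end{equation*}
The set $\{K + t\bfV : (K,\bfV)\in\calT,\ t\in[0,1]\}$ is compact in $\tilde\calC_n^\co$, so $\|\mathrm{Hess}\,\tilde J_n\|$ is uniformly bounded there; combined with the uniform comparability of $\|\cdot\|_K^{\KM}$ and the Euclidean norm on the compact set $\calK$ (and with $\grad\tilde J_n$ vs. $\nabla\tilde J_n$ related through the metric coordinates $G(K)\in\calS_n^{++}$, which are uniformly bounded and uniformly positive-definite on $\calK$ by continuity and compactness), this yields \eqref{lip} on $\calT$ with a constant $L_0$ depending only on $\calK$.

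\textbf{Upgrading to $\calT^*$.} Now take $(K',\bfV') = (\calT_S(K), d\calT_S(\bfV))$ with $(K,\bfV)\in\calT$, $S\in\GL(n)$. I would check each of the four quantities in \eqref{lip} transforms correctly: (a) $\tilde J_n(\calR_{K'}(\bfV')) = \tilde J_n(\calT_S(K+\bfV)) = \tilde J_n(K+\bfV)$ since the LQG cost is coordinate-invariant (it depends only on closed-loop data, and $\calT_S$ induces the similarity $\hat S$ on the closed loop) and $\calR$ is equivariant, $\calR_{K'}(\bfV') = \calT_S(K) + \calT_S(\bfV) = \calT_S(K+\bfV)$; (b) $\tilde J_n(K') = \tilde J_n(K)$ likewise; (c) $\|\bfV'\|_{K'}^{\KM} = \|\bfV\|_K^{\KM}$ by Theorem \ref{thm:KM-metric}; (d) $\inner{\nabla\tilde J_n(K'), \bfV'}_{K'}^{\KM} = \inner{\nabla\tilde J_n(K),\bfV}_K^{\KM}$, which follows because $\nabla\tilde J_n$ is coordinate-equivariant — this is itself a consequence of coordinate-invariance of $\tilde J_n$ and of the metric: $\inner{\nabla\tilde J_n(K'),\cdot}_{K'} = d\tilde J_n|_{K'}(\cdot) = d\tilde J_n|_K(d\calT_{S^{-1}}(\cdot)) = \inner{\nabla\tilde J_n(K), d\calT_{S^{-1}}(\cdot)}_K = \inner{d\calT_S(\nabla\tilde J_n(K)),\cdot}_{K'}$, using invariance of the metric in the last step. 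Since all four terms are $\GL(n)$-invariant, \eqref{lip} at $(K',\bfV')$ is literally \eqref{lip} at $(K,\bfV)$, which already holds. Hence $L := L_0$ works on all of $\calT^*$.

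\textbf{Main obstacle.} The routine-but-delicate part is verifying that the retraction segment $K + t\bfV$ genuinely stays inside $\tilde\calC_n^\co$ (not merely $\tilde\calC_n$), since Lemma \ref{lem:stability-certificate} only certifies stability, and $\tilde J_n$ is defined on $\tilde\calC_n^\co$; the footnote to that lemma flags exactly this gap. I expect to handle it by first proving \eqref{lip} on the dense subset where the segment avoids the non-minimal locus (a closed set of measure zero by Lemma \ref{lem:generic}, and the segment endpoints and base point $K$ are minimal), then extending by continuity of both sides of \eqref{lip} in $(K,\bfV)$ — or equivalently by noting $\tilde J_n$ extends continuously (indeed smoothly as $\tr(Q_\cl X)$) to the stabilizing-but-possibly-non-minimal regime, so the Taylor argument goes through on all of $\tilde\calC_n$ and one only needs minimality of $\calK$ for the metric comparisons. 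The second mild subtlety is the passage between Euclidean and KM quantities (items relating $\grad$ to $\nabla$); this is pure linear algebra once one has the uniform two-sided bound $c\,I \preceq G(K) \preceq C\,I$ on $\calK$ from continuity and compactness.
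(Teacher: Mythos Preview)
Your proposal is correct and follows essentially the same two-step strategy as the paper: compactness of $\calT$ (via continuity of $r$) gives uniform second-order control and hence \eqref{lip} on $\calT$, and then coordinate-invariance of each term in \eqref{lip} propagates the bound to all of $\calT^*$. On your flagged ``main obstacle,'' the paper resolves it exactly by the route you mention last---observing that $\tilde{J}_n = \tr(Q_\cl X)$ is analytic on the full stabilizing set $\tilde{\calC}_n$, so minimality along the segment $K+t\bfV$ is never needed for the Taylor/Lipschitz step.
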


\begin{proof}
    Remark that $r(\cdot)$ is continuous and $\calS \subset T\widetilde{\calC}_n^\co \subset T\widetilde{\calC}_n$ is compact. Since $\widetilde{J}_n$ is also analytic over $\widetilde{\calC}_n$, then $\widetilde{J}_n \circ \calR: \calS \to \bbR$ is well-defined and analytic over compact $\calS$. As such its derivatives are bounded uniformly, and hence satisfies (\ref{lip}) \cite[Lemma 10.57]{boumal2023introduction}. 
    
    Fix $(\sfK,\bfV) \in \calS$ and $S \in \GL_n$. Since $\sfK+\bfV \in \widetilde{\calC}_n$, so is $\scrT_S(\sfK) + d\scrT_S(\bfV)$. Due to the invariance properties, (\ref{lip}) holds for $(\scrT_S(\sfK),d\scrT_S(\bfV))$.
\end{proof}

Let $L>0$ be the Lipschitz constant from Lemma \ref{lem:lipschitz-inequality} with $\calK := \widetilde{\calD}$. Ensure $L$ sufficiently large so that $\frac{1}{L} \leq \min r(\widetilde{\calD})$. The Lipschitz-like inequality holds for all $(\sfK,\bfV) \in \calS^*$. Take note that $\widetilde{\calD}^* := \pi_1(\calS^*)= \bigcup_{S \in \GL_n} \scrT_S(\widetilde{\calD}) = \pi^{-1}(\pi(\widetilde{\calD}))$. 
Set $\widetilde{\calL}_0:=\pi^{-1}(\calL_0)$; then $\widetilde{\calL}_0 \subset \widetilde{\calD}^*$.

The above analysis now leads to the following key result.
\begin{lemma}
   The set $\widetilde{\calL}_0$ in invariant under the map $\widetilde{F}$, that is, for $\sfK \in \widetilde{\calL}_0$, $\widetilde{F}(\sfK):=\calR_\sfK(-\frac{1}{L} \nabla \widetilde{J}_n(\sfK)) \in \widetilde{\calL}_0$.  
\end{lemma}
\begin{proof}
    Let $\sfK^+ := \widetilde{F}(\sfK)$. Define the curve $c(t) = \calR_\sfK(-\frac{t}{L} \nabla \widetilde{J}_n(\sfK))$. Plugging this into (\ref{lip}), it follows that $\widetilde{J}_n(\sfK^+) \leq \widetilde{J}_n(\sfK)$. 
    Furthermore, $\pi \circ c$ is continuous, contained in $\calL_0$, and connects $[\sfK]$ to $[\sfK^+]$. Since $\calL_0$ is closed, we must have $[\sfK^+] \in \calL_0$. Hence, $\sfK^+ \in \widetilde{\calL}_0$.
\end{proof}

This all implies that $F:\calL_0 \to \calL_0$, $F([\sfK]):=[\widetilde{F}(\sfK)]$ is well-defined and smooth. The local convergence guarantee in Theorem~\ref{main-thm} now follows from \cite[Thm. 4.19]{boumal2023introduction}.

\subsection{Limitation of gradient descent on LQG landscape}\label{sect:limitations}

The GD procedure lacks coordinate-equivariance (\ref{coord-equiv}), that is, $\scrT_S(\sfK - s \cdot \grad \widetilde{J}_n(\sfK)) \not = \scrT_S(\sfK) - s \cdot \grad \widetilde{J}_n(\scrT_S(\sfK))$. This is due to the fact that the Euclidean metric, despite its simplicity, is not coordinate-invariant. As such, GD has to search through $\dim(\GL_n)=n^2$ redundant dimensions. Furthermore, if we initialize $\sfK_0$ with particularly ``bad'' coordinates, one ends up with a large value of $\|\sfK_0\|_F$; in that case, $\|\grad \widetilde{J}_n(\sfK_0)\|_F$ will also be large. Such ill-conditioned coordinates in turn generally result in numerical instabilities. These two issues are resolved however when the metric is coordinate-invariant and retraction is coordinate-equivariant.

\section{Numerical Experiments and Results}\label{sect:exp}

We will now compare RGD with ordinary gradient descent (Figure \ref{fig:plots}). Our step size procedure is Algorithm \ref{alg:armijo}.
\begin{algorithm}
    \small
    \begin{algorithmic}
        \caption{Backtracking Line-Search}\label{alg:armijo}        
        \Require $\sfK \in \widetilde{\calC}_n^\co$, $\gamma \in (0,1)$, $\beta \in (0,1), \bar{s} > 0$
        \State $s \gets \bar{s}$
        \State $\sfK^+ \leftarrow K - s\nabla J_n(\sfK)$
        \While{$\sfK^+ \not \in \widetilde{\calC}_n^\co \textbf{ or } J_n(\sfK) - J_n(\sfK^+) < \gamma s \|\nabla J_n(\sfK)\|_\sfK^2$}
            \State $s \gets \beta s$
            \State $\sfK^+ \leftarrow K - s\nabla J_n(\sfK)$
        \EndWhile
        
        \Return $s$
    \end{algorithmic}
\end{algorithm}

The parameters in our algorithm were chosen as $T = 10^4$, $\gamma = 0.01$, $\beta = 0.5, \epsilon = 10^{-6}$, and $ \bar{s}=1$. We halted the simulation when $J_n(\sfK) - J_n^* < 10^{-5}$. We initialized $\sfK_0$ by generating a gain and observer with random pole placement in $(-2,-1)$. For GD, we used the same parameters and starting point. We compared GD against two KM metrics: \textbf{(1)} $c_1=c_2=c_3=1$ and \textbf{(2)} with $c_1=1$, $c_2=c_3=0$.

We ran our numerical experiments against four representative systems.\footnote{Our code is available at github.com/rainlabuw/riemannian-PO-for-LQG.} The first system is Doyle's counterexample \cite{doyle1978guaranteed}. The second system is a plant whose LQG controller is non-minimal, and the third system admits saddle points with vanishing Hessians; these systems are found in \cite{tang2021analysis}. The fourth system has dimensions $(n,m,p)=(4,3,3)$ and entries either set to zero or sampled from the standard Gaussian distribution with probability $0.8$ to promote sparsity. 

\begin{figure}
    \centering
    \includegraphics[width=.8\linewidth]{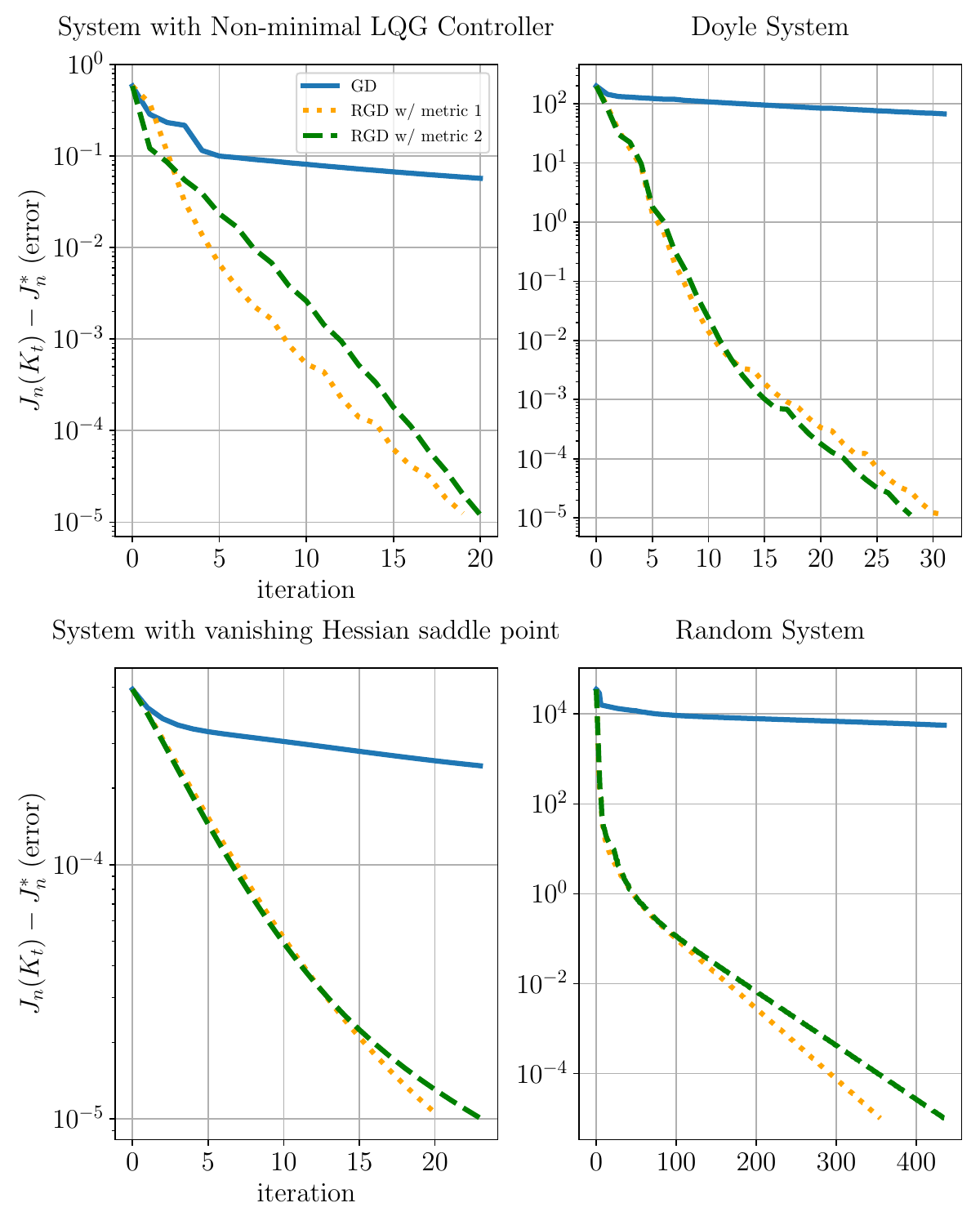}
    \caption{Comparison of RGD vs. GD for LQG PO for four distinct systems.}
    \label{fig:plots}
\end{figure}

As we observe, in all four cases, Algorithm \ref{alg:RGD} significantly outperforms GD. In fact, for the vanishing Hessian system, GD gets stuck in the non-strict saddle point. The intuition behind this behavior is that the Hessians for saddle points in our setup have $n^2$ \textit{less} zero eigenvalues than in the Euclidean case, granting RGD more leeway in avoiding the corresponding directions.

\section{Conclusion and Future Directions}\label{sect:conclusion}
In this paper, we presented a novel coordinate-invariant Riemannian metric for the space of full-order minimal dynamic output-feedback controllers. In this direction, we have shown how to minimize the LQG cost over this domain via RGD, a Riemannian gradient descent algorithm over the Riemannian quotient manifold of such controllers modulo coordinate transformation. Next, we presented the proof of guaranteed local convergence of the proposed algorithm with linear rate and computationally compared our algorithm with ordinary GD for four representative systems. 
Future directions include PO constrained synthesis, and exploring other coordinate-invariant Riemannian metrics induced by system-theoretic constructs.

\section{Acknowledgements}
The authors thank Shahriar Talebi for many helpful discussions on policy optimization and Riemannian geometry of stabilizing feedback gains.
The research of the authors has been supported by NSF grant ECCS-2149470 and AFOSR grant FA9550-20-1-0053.

\bibliographystyle{ieeetr}
\bibliography{references}

\end{document}